 \newtheorem{thm}{Theorem}[section]
 \newtheorem{lem}[thm]{Lemma}
    \theoremstyle{definition}
 \newtheorem{defn}[thm]{Definition}
  \newtheorem{notn}[thm]{Notation}
  \newtheorem{obsn}[thm]{Observation}
  \newtheorem{example}[thm]{Example}
 \theoremstyle{remark}
\numberwithin{equation}{section}
\begin{document}

\title[A Poisson generalized Weyl algebra]{Semiclassical limits of Ore extensions and a Poisson generalized Weyl algebra}

\author{Eun-Hee Cho} \author{Sei-Qwon Oh}

\address{Department of Mathematics, Chungnam National  University, 99 Daehak-ro,   Yuseong-gu, Daejeon 34134, Korea}
 \email{ehcho@cnu.ac.kr}  \email{sqoh@cnu.ac.kr}


\subjclass[2010]{16S36, 16W35,  17B63}

\keywords{Poisson polynomial extension, Ore extension,  Semiclassical limit, Quantum generalized Weyl algebra}



\begin{abstract}
  We observe \cite[Proposition 4.1]{LaLe} that Poisson polynomial extensions appear as semiclassical limits of a class of Ore extensions. As an application, a Poisson generalized Weyl algebra $A_1$ considered as a Poisson version of the quantum generalized Weyl algebra is constructed and its Poisson structures are studied. In particular,  it is obtained a necessary and sufficient condition such that $A_1$ is Poisson simple and established that the  Poisson endomorphisms of $A_1$ are Poisson analogues of the endomorphisms of the quantum generalized Weyl algebra.
\end{abstract}

\maketitle


\section{Introduction}

Let $h$ be a nonzero, nonunit, non-zero-divisor  and central element of an algebra $R$ such that $R/hR$ is commutative.
Then $R/hR$ becomes a Poisson algebra with Poisson bracket
$$\{\overline{a},\overline{b}\}=\overline{h^{-1}(ab-ba)}$$ for all
$\overline{a},\overline{b}\in R/hR$. In such case,  $R/hR$ is called a semiclassical limit of $R$. Since the Poisson bracket of $R/hR$ is induced by the commutation rule of $R$, it is expected that the Poisson structures of $R/hR$ are heavily related to the algebraic structures of deformation algebras of $R$ since they are induced by the same algebra $R$. In fact, algebraic structures of quantized algebras are  analogues to Poisson structures of their semiclassical limits as seen in many cases \cite{Good3}, \cite{Good4}, \cite{JoOh},  \cite{Oh12}, \cite{Oh7} and \cite{OhPa}.
 A main aim of this paper is to give a method how to construct Poisson algebras considered as a Poisson version of algebras related to a class of Ore extensions and an example illustrating this method. Namely, we observe \cite[Proposition 4.1]{LaLe} that Poisson polynomial extensions appear as semiclassical limits of a class of Ore extensions, construct a Poisson generalized Weyl algebra $A_1$ as an application and we verify that the Poisson endomorphisms of $A_1$ are Poisson analogues of the endomorphisms of quantum generalized Weyl algebra.

The Poisson polynomial extensions were constructed as a Poisson version of Ore extensions by the second author in \cite{Oh8}
and the quantum generalized Weyl algebra $A(a(h),q)$ over a Laurent polynomial ring in one variable was constructed by Bavula \cite{Ba} and the endomorphisms of $A(a(h),q)$ were completely classified by Kitchin and Launois \cite{KiLa} in the case when $a(h)$ is not invertible and $q$ is not a root of unity. Here we find a natural map $\Gamma$ from Ore extensions onto their semiclassical limits and then, as an application, we construct a Poisson generalized Weyl algebra $A_1$ induced from $A(a(h),q)$ by using $\Gamma$. Next
we find a necessary and sufficient condition such that  $A_1$ is Poisson simple and establish that the Poisson endomorphisms of $A_1$ are Poisson analogues of the endomorphisms of $A(a(h),q)$ by classifying  the Poisson endomorphisms of $A_1$.

Assume throughout the paper that all algebras have  unity and that the base field is the complex number field $\Bbb C$.


Let us begin with recalling the following basic terminologies.

\begin{defn}
(1) Let $\Bbb F$ be a commutative $\Bbb C$-algebra. Given  an $\Bbb F$-automorphism $\alpha$ on an $\Bbb F$-algebra $R$,  an $\Bbb F$-linear map  $\delta$ is said to be a {\it left $\alpha$-derivation} on $R$ if
$$\delta(ab)=\delta(a)b+\alpha(a)\delta(b)$$ for all $a,b\in R$.   For such  pair $(\alpha,\delta)$,  there exists a skew polynomial $\Bbb F$-algebra (or Ore extension) $R[z;\alpha,\delta]$.
Refer to \cite[Chapter 2]{GoWa2} for details of the skew polynomial ring.

(2) A commutative $\Bbb C$-algebra $A$ is said to be  a {\it Poisson algebra} if there exists a bilinear product $\{-,-\}$ on $A$, called a {\it Poisson bracket}, such that $(A, \{-,-\})$ is a Lie algebra and $\{ab,c\}=a\{b,c\}+\{a,c\}b$ for all $a,b,c\in A$.

We recall \cite[1.1]{Oh8}. A derivation $\alpha$ on $A$ is said to be a {\it Poisson derivation} if $$\alpha(\{a,b\})=\{\alpha(a),b\}+\{a,\alpha(b)\}$$ for all $a,b\in A$. Let   $\alpha$ be a Poisson derivation and let $\delta$ be a derivation on $A$. If the pair $(\alpha,\delta)$ satisfies the following condition
\begin{equation}\label{PSKE}
\delta(\{a,b\})-\{\delta(a),b\}-\{a,\delta(b)\}=\alpha(a)\delta(b)-\delta(a)\alpha(b)
\end{equation}
for all $a,b\in A$, then  the commutative polynomial algebra $A[z]$ becomes a Poisson  algebra  with Poisson bracket $$\{z,a\}=\alpha(a)z+\delta(a)$$ for all $a\in A$. Such Poisson algebra $A[z]$ is called a  Poisson polynomial extension (or Poisson Ore extension) and  denoted by $A[z;\alpha,\delta]_p$. (In \cite[1.1]{Oh8},  $\{z,a\}$ is defined by $\{z,a\}=-\alpha(a)z-\delta(a)$ for all $a\in A$.)  If $\alpha=0$  then
we write $A[z;\delta]_p$ for $A[z;0,\delta]_p$ and if  $\delta=0$ then we write $A[z;\alpha]_p$ for $A[z;\alpha,0]_p$.

(3) An ideal $I$ of a Poisson algebra $A$ is said to be a {\it Poisson ideal} if $\{I,A\}\subseteq I$.  A Poisson ideal $P$ is said to be {\it Poisson prime} if, for all Poisson ideals $I$ and $J$, $IJ\subseteq P$ implies $I\subseteq P$ or $J\subseteq P$.  If $A$ is noetherian  then a Poisson prime ideal of $A$ is a prime ideal by \cite[Lemma 1.1(d)]{Good3}.
\end{defn}


\section{Polynomial extensions}

Let $t$ be an indeterminate.

\begin{notn}\label{ASSUM}
Let  a 5-tuple $({\bf K},\Bbb F, A, t-1, (\alpha,\delta))$ satisfy the following conditions (1)-(5):

(1) Assume that  ${\bf K}$ is an infinite subset of the set $\Bbb C\setminus\{0,1\}$.

(2) Assume that
$\Bbb F$ is a subring of the ring of regular functions on ${\bf K}\cup\{1\}$ containing $\Bbb C[t,t^{-1}]$. That is,
\begin{equation}\label{REG}
\Bbb C[t,t^{-1}]\subseteq \Bbb F\subseteq\{f/g\in\Bbb C(t)| f,g\in\Bbb C[t]\text{ such that } g(1)\neq0,g(\lambda)\neq0\ \forall\lambda\in {\bf K}\}.
\end{equation}

(3) Assume that $A$ is an $\Bbb F$-algebra generated by $x_1,\ldots,x_n$. Note that $A$ is also  a $\Bbb C$-algebra since $\Bbb C\subseteq \Bbb F$.

(4) Assume that  $t-1$ is a nonzero, nonunit and non-zero-divisor of $A$ such that the factor $A_1=A/(t-1)A$ is commutative.
(Note that $t-1\in\Bbb F$ is a central element of $A$ and thus $(t-1)A$ is an ideal of $A$.)

(5) Assume that  $\alpha$ and $\delta$ are $\Bbb F$-linear maps from $A$ into itself such that $\alpha$ is an automorphism,  $\delta$ is a left $\alpha$-derivation and  the pair $(\alpha,\delta)$ satisfies the condition
\begin{equation}\label{AD}
(\alpha-\text{id})(A)\subseteq (t-1)A,\ \ \ \delta(A)\subseteq (t-1)A,
\end{equation}
where $\text{id}$ is the identity map on $A$.
\end{notn}

By Notation~\ref{ASSUM}(5),  there exists the skew polynomial $\Bbb F$-algebra $$B:=A[z;\alpha,\delta].$$
For each $\lambda\in{\bf K}\cup\{1\}$, $(t-\lambda)A$ and $(t-\lambda)B$ are ideals of $A$ and $B$ respectively since $t-\lambda$ is a central element of $A$ and $B$. Set
$$A_\lambda:=A/(t-\lambda)A,\ \ \ B_\lambda:=B/(t-\lambda)B.$$
For an element $a$ of $A$ or $B$, denote by $\overline{a}$ the canonical image of $a$ in $A_\lambda$ and $B_\lambda$.
For each $\lambda\in{\bf K}$, define
$$\begin{aligned}
\alpha_\lambda&:A_\lambda\longrightarrow A_\lambda,& \ \  \alpha_\lambda(\overline{a})&=\overline{\alpha(a)},\\
\delta_\lambda&:A_\lambda\longrightarrow A_\lambda,& \ \  \delta_\lambda(\overline{a})&=\overline{\delta(a)}.
\end{aligned}$$

\begin{lem}\label{ITER}
(1) For each $\lambda\in{\bf K}$, $\alpha_\lambda$ is an automorphism  and $\delta_\lambda$ is a left $\alpha_\lambda$-derivation.

(2) For each $\lambda\in{\bf K}$,  $B_\lambda\cong A_\lambda[z;\alpha_\lambda,\delta_\lambda]$ as $\Bbb C$-algebras.
\end{lem}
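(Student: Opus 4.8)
The plan is to push $\alpha$ and $\delta$ through the central quotient $A\to A_\lambda$, verify the required identities on representatives, and then identify $B_\lambda$ with an Ore extension by means of the universal property of skew polynomial rings.

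First I would check that $\alpha_\lambda$ and $\delta_\lambda$ are well defined. Since $t-\lambda\in\Bbb F$ and $\alpha,\delta$ are $\Bbb F$-linear, $\alpha((t-\lambda)A)=(t-\lambda)\alpha(A)\subseteq(t-\lambda)A$ and $\delta((t-\lambda)A)=(t-\lambda)\delta(A)\subseteq(t-\lambda)A$, so both maps descend to $A_\lambda=A/(t-\lambda)A$ (this uses neither \eqref{AD} nor that $\lambda\neq1$). For part (1): $\alpha_\lambda$ is a $\Bbb C$-algebra endomorphism of $A_\lambda$ because $\alpha$ is one on $A$, it is surjective because $\alpha$ is, and it is injective because $\alpha^{-1}$ is also $\Bbb F$-linear, so $\alpha(a)\in(t-\lambda)A$ forces $a=\alpha^{-1}(\alpha(a))\in(t-\lambda)A$; in fact $(\alpha^{-1})_\lambda$ is a two-sided inverse of $\alpha_\lambda$. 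The left $\alpha_\lambda$-derivation identity is the one-line computation
\[
\delta_\lambda(\overline{a}\,\overline{b})=\overline{\delta(ab)}=\overline{\delta(a)b+\alpha(a)\delta(b)}=\delta_\lambda(\overline{a})\,\overline{b}+\alpha_\lambda(\overline{a})\delta_\lambda(\overline{b}).
\]

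For part (2), let $\phi\colon A\to A_\lambda[z;\alpha_\lambda,\delta_\lambda]$ be the quotient $A\to A_\lambda$ followed by the natural inclusion. Since $z\,\phi(a)=\alpha_\lambda(\overline{a})z+\delta_\lambda(\overline{a})=\phi(\alpha(a))z+\phi(\delta(a))$ for all $a\in A$, the universal property of the Ore extension $B=A[z;\alpha,\delta]$ (see \cite[Chapter 2]{GoWa2}) yields a $\Bbb C$-algebra homomorphism $\Psi\colon B\to A_\lambda[z;\alpha_\lambda,\delta_\lambda]$ with $\Psi|_A=\phi$ and $\Psi(z)=z$; it is surjective since its image contains $A_\lambda$ and $z$. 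Because $\Psi(t-\lambda)=\overline{t-\lambda}=0$ and $t-\lambda$ is central, $(t-\lambda)B\subseteq\ker\Psi$, so $\Psi$ factors through a surjection $\overline{\Psi}\colon B_\lambda\to A_\lambda[z;\alpha_\lambda,\delta_\lambda]$. Finally, $B=\bigoplus_{i\geq0}Az^i$ is free as a left $A$-module, hence $B_\lambda=\bigoplus_{i\geq0}A_\lambda\overline{z}^{\,i}$ is free as a left $A_\lambda$-module with basis $\{\overline{z}^{\,i}\}$; since $\overline{\Psi}$ is left $A_\lambda$-linear, restricts to the identity on $A_\lambda$, and sends $\overline{z}^{\,i}$ to $z^i$, it carries this basis onto the left $A_\lambda$-basis $\{z^i\}$ of $A_\lambda[z;\alpha_\lambda,\delta_\lambda]$ and is therefore bijective.

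The only step calling for genuine care is the injectivity of $\overline{\Psi}$: a priori $\ker\Psi$ could be strictly larger than $(t-\lambda)B$, and it is precisely the freeness of $B$ over $A$ — which guarantees that reduction modulo $(t-\lambda)$ does not collapse the decomposition of $B$ by $z$-degree — that rules this out. Everything else is a routine transport of the definitions through the central quotient, and I do not anticipate hidden difficulties there.
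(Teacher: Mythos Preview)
Your proof is correct and follows essentially the same route as the paper's: both use $\Bbb F$-linearity of $\alpha,\delta$ and $t-\lambda\in\Bbb F$ for well-definedness, and both invoke the universal property of $B=A[z;\alpha,\delta]$ (the paper cites \cite[Proposition 2.4]{GoWa2}) to produce the map $B\to A_\lambda[z;\alpha_\lambda,\delta_\lambda]$ and then identify its kernel with $(t-\lambda)B$. The paper simply asserts ``Clearly $\ker\overline{\psi}=(t-\lambda)B$'', whereas you supply the freeness argument that actually justifies it; your version is the more complete one.
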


\begin{proof}
(1) Since $\alpha$ and $\delta$ are $\Bbb F$-linear maps and $t-\lambda\in\Bbb F$, $\alpha_\lambda$ and $\delta_\lambda$ are well-defined.
Since $\alpha$ is an automorphism on $A$, $\alpha_\lambda$ is an automorphism on $A_\lambda$. Moreover $\delta_\lambda$ is a left $\alpha_\lambda$-derivation because
$\delta$ is a left $\alpha$-derivation.

(2) Let $\psi:A\longrightarrow A_\lambda[z;\alpha_\lambda,\delta_\lambda]$ be the  map defined by $\psi(a)=\overline{a}$. Then,
by \cite[Proposition 2.4]{GoWa2}, there is an extension $\overline{\psi}$ of $\psi$ to $B$
such that $\overline{\psi}(z)=z$ since $z\overline{a} =\overline{\alpha(a)}z+\overline{\delta(a)}$ for all $a\in A$. Clearly $\ker\overline{\psi}=(t-\lambda)B$ and $\overline{\psi}$ is surjective.
\end{proof}

 Since $t-1$ is a nonzero, nonunit and non-zero-divisor of $A$, it is also a  nonzero, nonunit and non-zero-divisor of $B$.
 Moreover $B_1$ is a commutative $\Bbb C$-algebra by (\ref{AD}).
 Hence $A_1$ and $B_1$ are Poisson algebras with Poisson brackets
\begin{equation}\label{POR}
\{\overline{a},\overline{b}\}=\overline{(t-1)^{-1}(ab-ba)}
\end{equation}
for all $a,b\in A$ and $a,b\in B$ by \cite[III.5.4]{BrGo}. The $\Bbb C$-algebras $A_1$ and $B_1$ are  said to be  {\it semiclassical limits} of $A$ and $B$ respectively. For each $\lambda\in{\bf K}$, the $\Bbb C$-algebras $A_\lambda$ and $B_\lambda$ are  said to be  {\it deformations} of $A$ and $B$ respectively.

\begin{lem}\label{CENT}
If $b\in B$ is a central element then $\overline{b}\in B_1$ is a Poisson central element.
\end{lem}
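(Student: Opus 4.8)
The plan is to unwind the definition of the Poisson bracket on $B_1$ given in (\ref{POR}) and to observe that centrality in $B$ forces the relevant commutators to vanish already in $B$, before passing to the quotient. Recall that, for $a,c\in B$, the element $\{\overline{a},\overline{c}\}$ is by definition $\overline{d}$, where $d\in B$ is the unique element satisfying $(t-1)d=ac-ca$; such a $d$ exists because $B_1$ is commutative, so $ac-ca\in(t-1)B$, and it is unique because $t-1$ is a non-zero-divisor of $B$ (both facts are recorded just before the statement).

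Now suppose $b\in B$ is central. Every element of $B_1$ is of the form $\overline{c}$ for some $c\in B$. Since $bc-cb=0$ in $B$, the unique element $d$ with $(t-1)d=bc-cb=0$ is $d=0$, again because $t-1$ is a non-zero-divisor. Hence $\{\overline{b},\overline{c}\}=\overline{d}=\overline{0}=0$. As $\overline{c}$ ranges over all of $B_1$, this gives $\{\overline{b},B_1\}=0$, i.e. $\overline{b}$ is a Poisson central element.

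There is essentially no obstacle here: the only point requiring care is that the formula (\ref{POR}) is well-defined, which rests precisely on the two facts already available in the excerpt, namely that $B_1$ is commutative and that $t-1$ is a non-zero-divisor of $B$. Once the bracket is written in the form $\{\overline{a},\overline{c}\}=\overline{d}$ with $(t-1)d=ac-ca$, the claim is immediate, because a central $b$ makes $ac-ca$ vanish identically in $B$, not merely modulo $(t-1)B$.
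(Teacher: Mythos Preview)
Your argument is correct and is exactly what the paper intends: its entire proof reads ``It is clear by (\ref{POR}),'' and you have simply spelled out why centrality of $b$ forces $(t-1)^{-1}(bc-cb)=0$ in $B$, hence $\{\overline{b},\overline{c}\}=0$ in $B_1$.
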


\begin{proof}
It is clear by (\ref{POR}).
\end{proof}

In the following theorem, note that the maps $\alpha_1,\delta_1$ are constructed in a different way from $\alpha_\lambda,\delta_\lambda$ for $\lambda\in{\bf K}$.

\begin{thm}\label{SC} \cite[Proposition 4.1]{LaLe}
$B_1\cong A_1[z;\alpha_1,\delta_1]_p$ as Poisson $\Bbb C$-algebras, where $\alpha_1$ and $\delta_1$ are defined by
$$\begin{aligned}
\alpha_1( \overline{a})&=\overline{(t-1)^{-1}({\alpha(a)-a})},\\
 \delta_1(\overline{a})&=\overline{(t-1)^{-1}{\delta(a)}}\end{aligned}$$
for all $a\in A$.
\end{thm}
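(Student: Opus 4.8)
The plan is to produce an explicit isomorphism of Poisson $\Bbb C$-algebras between $B_1$ and $A_1[z;\alpha_1,\delta_1]_p$, building on the fact that both are $A_1$ in the variable $z$ as commutative algebras. First I would check that $\alpha_1$ and $\delta_1$ are well-defined $\Bbb C$-linear maps on $A_1$: this is exactly where hypothesis \eqref{AD} is used, since $(\alpha-\mathrm{id})(A)\subseteq(t-1)A$ and $\delta(A)\subseteq(t-1)A$ guarantee that $(t-1)^{-1}(\alpha(a)-a)$ and $(t-1)^{-1}\delta(a)$ lie in $A$, and if $a\in(t-1)A$ then both images lie in $(t-1)A$ as well because $t-1$ is central and a non-zero-divisor (so dividing by $t-1$ is unambiguous). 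Next I would verify that $\alpha_1$ is a Poisson derivation of $A_1$ and $\delta_1$ is a derivation of $A_1$ satisfying the compatibility condition \eqref{PSKE}; the cleanest route is to derive each identity from the corresponding algebraic identity in $A$ (that $\alpha$ is an algebra automorphism, $\delta$ a left $\alpha$-derivation) by writing $\alpha = \mathrm{id} + (t-1)\tilde\alpha$ with $\tilde\alpha$ a lift of $\alpha_1$ and expanding modulo the appropriate power of $(t-1)$, remembering that the Poisson bracket on $A_1$ is itself the image of $(t-1)^{-1}[-,-]$.

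Once the Poisson Ore extension $A_1[z;\alpha_1,\delta_1]_p$ is known to exist, I would define the map $\phi\colon B_1 \to A_1[z;\alpha_1,\delta_1]_p$ by $\overline{a}\mapsto \overline{a}$ for $a\in A$ and $\overline{z}\mapsto z$. As a map of commutative $\Bbb C$-algebras this is well-defined and bijective: by Lemma~\ref{ITER}(2) applied in the limiting sense (or directly, since $B=A[z;\alpha,\delta]$ is free as a left $A$-module on the powers of $z$ and $(t-1)$ is central), $B_1 \cong A_1[z]$ as commutative algebras, matching $A_1[z;\alpha_1,\delta_1]_p$ on the nose as algebras. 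The content is that $\phi$ is a Poisson map, and by the Leibniz rule it suffices to check $\{\phi(u),\phi(v)\}$ agrees with $\phi(\{u,v\})$ on generators, i.e. for the pairs $(\overline{a},\overline{b})$ with $a,b\in A$ and $(z,\overline{a})$ with $a\in A$. The first pair is immediate since $\phi$ restricts to the identity on $A_1$ and the Poisson bracket there is induced from $A$ in both source and target. For the second pair, in $B_1$ we compute $\{\overline z,\overline a\} = \overline{(t-1)^{-1}(za-az)}$ by \eqref{POR}; using the Ore relation $za = \alpha(a)z + \delta(a)$ in $B$, this equals $\overline{(t-1)^{-1}((\alpha(a)-a)z + \delta(a))} = \alpha_1(\overline a)\,\overline z + \delta_1(\overline a)$, which is precisely $\{z,\overline a\}$ in $A_1[z;\alpha_1,\delta_1]_p$ by the defining bracket of a Poisson Ore extension.

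The main obstacle, and the step deserving the most care, is the verification that $(\alpha_1,\delta_1)$ genuinely satisfies \eqref{PSKE} (and that $\alpha_1$ is a Poisson derivation): this requires a second-order expansion in $(t-1)$, since the left-hand side of \eqref{PSKE} involves the Poisson bracket (already one factor of $(t-1)^{-1}$) composed with $\delta_1$ (another factor of $(t-1)^{-1}$), while $\alpha$ and $\delta$ interact through the left $\alpha$-derivation rule $\delta(ab)=\delta(a)b+\alpha(a)\delta(b)$ only modulo the relevant power of $(t-1)$. Concretely, writing everything in $A$ and tracking which identities hold modulo $(t-1)$ versus $(t-1)^2$, one finds that \eqref{PSKE} for $(\alpha_1,\delta_1)$ is equivalent to an identity that follows from $\alpha$ being an automorphism and $\delta$ a left $\alpha$-derivation together with \eqref{AD}. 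Alternatively — and this is presumably what \cite[Proposition 4.1]{LaLe} does — one can bypass the direct computation: define the commutative algebra $C = A_1[z]$, transport the bracket \eqref{POR} from $B_1$ to $C$ via the algebra isomorphism, observe it is automatically a Poisson bracket (being a semiclassical limit), and then simply read off from the generator computation above that this bracket is exactly the Poisson Ore extension bracket; the required properties of $(\alpha_1,\delta_1)$ then come for free as a consequence rather than needing independent verification. I would adopt this second strategy, as it localizes all the work into the single generator computation $\{\overline z,\overline a\}=\alpha_1(\overline a)\overline z+\delta_1(\overline a)$ and the bookkeeping that $B_1\cong A_1[z]$ as commutative algebras.
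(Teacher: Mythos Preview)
Your proposal is correct. Note, however, that the paper does not supply its own proof of this theorem: it is stated with a direct citation to \cite[Proposition~4.1]{LaLe} and no proof environment follows. So there is no in-paper argument to compare against.

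On its own merits your argument is sound. The generator computation
\[
\{\overline z,\overline a\}=\overline{(t-1)^{-1}(za-az)}=\overline{(t-1)^{-1}\bigl((\alpha(a)-a)z+\delta(a)\bigr)}=\alpha_1(\overline a)\,\overline z+\delta_1(\overline a)
\]
is exactly the right calculation, and the observation that $B_1\cong A_1[z]$ as commutative algebras (because $B$ is free over $A$ on the powers of $z$ and $t-1$ is central) is what makes the isomorphism of underlying algebras immediate. Your second strategy---transporting the known Poisson bracket on $B_1$ to $A_1[z]$ and reading off its Ore-extension form from the generator computation, so that the compatibility conditions on $(\alpha_1,\delta_1)$ follow \emph{a posteriori}---is the cleaner route and avoids the second-order bookkeeping you flagged; this is almost certainly the approach taken in the cited reference as well.
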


For each $\lambda\in{\bf K}$, let $\Bbb C_\lambda=\Bbb C$.
Note  that,  for $f(t)\in\Bbb F$, the complex number $f(\lambda)$ is well-defined by (\ref{REG}).
Define a $\Bbb C$-algebra homomorphism
 $$\gamma_{_\Bbb F}:\Bbb F\longrightarrow \prod_{\lambda\in{\bf K}}\Bbb C_\lambda,\ \ \ \gamma_{_\Bbb F}(f(t))=(f(\lambda))_{\lambda\in{\bf K}}.$$

\begin{lem}\label{FIR}
The $\Bbb C$-algebra homomorphism $\gamma_{_\Bbb F}$ is injective.
\end{lem}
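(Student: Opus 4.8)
The plan is to show that the kernel of $\gamma_{_\Bbb F}$ is trivial. Suppose $f(t)\in\Bbb F$ lies in $\ker\gamma_{_\Bbb F}$, so that $f(\lambda)=0$ for every $\lambda\in{\bf K}$. By the containment (\ref{REG}), write $f=p/q$ with $p,q\in\Bbb C[t]$ and $q(\lambda)\neq0$ for all $\lambda\in{\bf K}$ (and $q(1)\neq0$). Then $f(\lambda)=0$ forces $p(\lambda)=0$ for every $\lambda\in{\bf K}$. Since ${\bf K}$ is an infinite subset of $\Bbb C$ by Notation~\ref{ASSUM}(1), the polynomial $p$ has infinitely many roots, hence $p=0$ in $\Bbb C[t]$, and therefore $f=0$ in $\Bbb C(t)$, so $f=0$ in $\Bbb F$. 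This shows $\ker\gamma_{_\Bbb F}=0$, i.e. $\gamma_{_\Bbb F}$ is injective.

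The only point requiring care is that an element of $\Bbb F$ is \emph{a priori} a rational function rather than a polynomial, so one cannot directly invoke "a nonzero polynomial has finitely many roots"; one first clears denominators, using exactly the defining constraint in (\ref{REG}) that the denominator is nonvanishing on ${\bf K}$, so that passing from $f$ to its numerator $p$ loses no zeros on ${\bf K}$. After that reduction the argument is the standard fact that $\Bbb C[t]$ is a domain in which a nonzero element cannot vanish on an infinite set. I do not anticipate any real obstacle here; the statement is essentially a bookkeeping lemma recording that the hypotheses of Notation~\ref{ASSUM} were set up precisely so that $\Bbb F$ embeds into the product $\prod_{\lambda\in{\bf K}}\Bbb C_\lambda$, which will later let us detect identities in $\Bbb F$ (and hence relations among $\alpha$, $\delta$ and the structure constants of $A$) by checking them at each $\lambda\in{\bf K}$.
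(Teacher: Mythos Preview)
Your argument is correct and follows essentially the same approach as the paper: assume $f\in\ker\gamma_{_\Bbb F}$, deduce that $f$ vanishes on the infinite set ${\bf K}$, and conclude $f=0$ from the fact that a nonzero polynomial has only finitely many zeros. In fact your version is slightly more careful than the paper's, since you explicitly clear denominators using (\ref{REG}) before invoking the polynomial fact, whereas the paper applies it directly to $f$.
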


\begin{proof}
Suppose that $\gamma_{_\Bbb F}(f(t))=0$. Then $f(\lambda)=0$ for all $\lambda\in{\bf K}$. Since ${\bf K}$ is an infinite set and every nonzero polynomial has only finite zeros, $f(t)=0$. Hence $\gamma_{_\Bbb F}$ is injective.
\end{proof}

 Set
$$\widehat{A}=\prod_{\lambda\in{\bf K}}A_\lambda,\ \ \ \widehat{B}=\prod_{\lambda\in{\bf K}}B_\lambda.$$
Let  $$\pi_\lambda:\widehat{A}\longrightarrow A_\lambda,\ \ \ \pi_\lambda:\widehat{B}\longrightarrow B_\lambda$$ be the canonical projections onto $A_\lambda$ and $B_\lambda$ for each $\lambda\in{\bf K}$
 and let $\gamma_A$ and $\gamma_B$ be the $\Bbb C$-algebra homomorphisms
\begin{equation}\label{gamm}
\begin{aligned}
\gamma_A&:A\longrightarrow \widehat{A},&\ \ \gamma(a)&=(\overline{a})_{\lambda\in{\bf K}},\\
\gamma_B&:B\longrightarrow \widehat{B},&\ \ \gamma(b)&=(\overline{b})_{\lambda\in{\bf K}}.
\end{aligned}
\end{equation}
 Thus $\pi_\lambda\gamma_A(a)=\overline{a}$ and $\pi_\lambda\gamma_B(b)=\overline{b}$ for each $\lambda\in{\bf K}$.

\begin{lem}\label{INJ}
$\gamma_A$ is injective if and only if $\gamma_B$ is injective.
\end{lem}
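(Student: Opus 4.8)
The plan is to exploit the fact that $B = A[z;\alpha,\delta]$ is a free left $A$-module with basis $\{z^i : i\geq 0\}$, and that this freeness descends to each $B_\lambda$. First I would observe that since $\gamma_B$ restricts to $\gamma_A$ on the subalgebra $A\subseteq B$ (indeed $\pi_\lambda\gamma_B(a)=\overline a=\pi_\lambda\gamma_A(a)$ for $a\in A$), injectivity of $\gamma_B$ immediately forces injectivity of $\gamma_A$. That is the easy direction.

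For the converse, suppose $\gamma_A$ is injective and let $b=\sum_{i=0}^m a_i z^i\in B$ with $a_i\in A$ satisfy $\gamma_B(b)=0$, i.e.\ $\overline b = 0$ in $B_\lambda$ for every $\lambda\in{\bf K}$. Using Lemma~\ref{ITER}(2), which gives $B_\lambda\cong A_\lambda[z;\alpha_\lambda,\delta_\lambda]$ as $\Bbb C$-algebras, the image of $b$ in $B_\lambda$ is $\sum_{i=0}^m \overline{a_i}\, z^i$ in the skew polynomial ring $A_\lambda[z;\alpha_\lambda,\delta_\lambda]$, which is a free left $A_\lambda$-module on the powers of $z$. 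Hence $\overline b=0$ in $B_\lambda$ forces $\overline{a_i}=0$ in $A_\lambda$ for every $i$ and every $\lambda\in{\bf K}$; that is, $\gamma_A(a_i)=0$ for each $i$. By the assumed injectivity of $\gamma_A$ we get $a_i=0$ for all $i$, hence $b=0$, so $\gamma_B$ is injective.

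The one point requiring a little care is that the identification in Lemma~\ref{ITER}(2) is only a $\Bbb C$-algebra isomorphism, not an $A$-linear one in any naive sense; but it does send the coset of $z$ to $z$ and the coset of $a\in A$ to $\overline a\in A_\lambda$, so it carries the standard representation $b=\sum a_i z^i$ in $B$ to the standard representation $\sum\overline{a_i}z^i$ in $A_\lambda[z;\alpha_\lambda,\delta_\lambda]$, and the latter ring's freeness as a left $A_\lambda$-module on $\{z^i\}$ is exactly what lets us read off the coefficients. I do not anticipate a genuine obstacle here; the proof is a routine ``coefficientwise'' argument, and the main thing to get right is that the coefficient extraction is performed in each $B_\lambda$ simultaneously and the hypothesis on $\gamma_A$ is then applied to each coefficient separately.
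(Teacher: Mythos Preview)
Your proposal is correct and follows essentially the same route as the paper's proof: the easy direction uses that $\gamma_A$ is the restriction of $\gamma_B$ to $A$, and the converse proceeds by writing $b=\sum_i a_i z^i$, reading off $\overline{a_i}=0$ in each $A_\lambda$ from $\overline b=0$ in $B_\lambda$, and then applying the injectivity of $\gamma_A$ coefficientwise. The only difference is that you make explicit the appeal to Lemma~\ref{ITER}(2) and the freeness of $A_\lambda[z;\alpha_\lambda,\delta_\lambda]$ over $A_\lambda$ to justify the coefficient extraction, which the paper leaves implicit.
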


\begin{proof}
If $\gamma_B$ is injective then $\gamma_A$ is also injective since $\gamma_A$ is the restriction of $\gamma_B$ to $A$.
Suppose that $\gamma_A$ is injective and that $\gamma_B(\sum_i a_iz^i)=0$, where $a_i\in A$. Then $\overline{a_i}=0$ for each $\lambda\in{\bf K}$. It follows $\gamma_A(a_i)=0$.
Hence $a_i=0$ for all $i$ since $\gamma_A(a_i)=0$ and $\gamma_A$ is injective.
\end{proof}

\begin{thm}\label{GAMMA}
Suppose that $\gamma_A$ is injective. Set
$$\begin{aligned}
&\Gamma_A:\gamma_A(A)\longrightarrow A_1,&\ \ \ \Gamma_A(x)&=\overline{\gamma_A^{-1}(x)},\\
&\Gamma_B:\gamma_B(B)\longrightarrow B_1,&\ \ \ \Gamma_B(x)&=\overline{\gamma_B^{-1}(x)}.
\end{aligned}$$
Then the $\Bbb C$-algebra homomorphisms $\Gamma_A$ and $\Gamma_B$ are surjective.
\end{thm}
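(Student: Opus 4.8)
The plan is to recognize $\Gamma_A$ and $\Gamma_B$ as compositions of maps that are already known to behave well. First I would observe that, since $\gamma_A$ is assumed injective, the corestriction $\gamma_A\colon A\longrightarrow\gamma_A(A)$ is a $\Bbb C$-algebra isomorphism, so its inverse $\gamma_A^{-1}\colon\gamma_A(A)\longrightarrow A$ is a well-defined $\Bbb C$-algebra isomorphism. Then $\Gamma_A$ is by definition the composite of $\gamma_A^{-1}$ with the canonical surjection $A\longrightarrow A_1=A/(t-1)A$, $a\mapsto\overline{a}$. Being a composite of a $\Bbb C$-algebra isomorphism followed by a surjective $\Bbb C$-algebra homomorphism, $\Gamma_A$ is itself a surjective $\Bbb C$-algebra homomorphism.

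For $\Gamma_B$ the only additional input needed is that $\gamma_B$ is injective; this follows from the hypothesis together with Lemma~\ref{INJ}, which asserts that $\gamma_A$ is injective if and only if $\gamma_B$ is. Once $\gamma_B$ is known to be injective, the identical argument applies: $\gamma_B^{-1}\colon\gamma_B(B)\longrightarrow B$ is a $\Bbb C$-algebra isomorphism, and $\Gamma_B$ is its composite with the canonical surjection $B\longrightarrow B_1=B/(t-1)B$, hence a surjective $\Bbb C$-algebra homomorphism.

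There is essentially no obstacle here; the statement is a formal consequence of the definitions together with Lemma~\ref{INJ}. The only point worth stating carefully is the verification that $\gamma_A^{-1}$ (resp.\ $\gamma_B^{-1}$) really is a $\Bbb C$-algebra homomorphism on the subalgebra $\gamma_A(A)\subseteq\widehat A$ (resp.\ $\gamma_B(B)\subseteq\widehat B$), which is immediate from injectivity of the bijective $\Bbb C$-algebra map $\gamma_A$ (resp.\ $\gamma_B$) onto its image. I would then simply record that surjectivity of $\Gamma_A$ and $\Gamma_B$ is inherited from surjectivity of the canonical quotient maps $A\twoheadrightarrow A_1$ and $B\twoheadrightarrow B_1$.
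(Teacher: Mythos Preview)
Your proposal is correct and follows essentially the same approach as the paper's proof: invoke Lemma~\ref{INJ} to get injectivity of $\gamma_B$ from that of $\gamma_A$, so that $\Gamma_A$ and $\Gamma_B$ are well-defined $\Bbb C$-algebra homomorphisms, and then observe that surjectivity is inherited from the canonical quotient maps $A\twoheadrightarrow A_1$ and $B\twoheadrightarrow B_1$. Your write-up is simply a more detailed version of the paper's two-line argument.
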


\begin{proof}
By Lemma~\ref{INJ}, $\gamma_A$ and $\gamma_B$ are injective and thus $\Gamma_A$ and $\Gamma_B$ are well-defined $\Bbb C$-algebra homomorphisms.
Since the canonical maps from $A$ into $A_1$ is surjective, $\Gamma_A$ is surjective clearly. Similarly $\Gamma_B$ is  surjective.
\end{proof}


\section{Poisson generalized Weyl algebra}

The following quantum generalized Weyl algebra $A(a(h),q)$ is a special case of the generalized Weyl algebra introduced by Bavula in \cite{Ba}.

\begin{defn}
Let $0\neq q\in\Bbb C$ be not a root of unity and let
$0\neq a(h)\in\Bbb C[h^{\pm1}]$.
The quantum generalized Weyl algebra $A(a(h),q)$  is  the $\Bbb C$-algebra generated by $h^{\pm1}, x,y$ subject to the relations
$$xh=qhx,\ \ yh=q^{-1}hy,\ \ xy=a(qh),\ \ yx=a(h),\ \ h^{\pm1} h^{\mp1}=1.$$
\end{defn}

Set
$$\Bbb F=\Bbb C[t,t^{-1}],\ \ {\bf K}=\Bbb C\setminus(\{0,1\}\cup\{\text{roots of unity}\}).$$

Assume throughout the section that $0\neq q\in\Bbb C$ is {\bf not a root of unity} and that
$0\neq a(h)\in\Bbb C[h^{\pm1}]$ is {\bf not invertible}. Hence $q\in{\bf K}$ and $a(h)$ has at least two nonzero terms.

Set
$$B:=\Bbb F[h^{\pm1}][x;\alpha][y;\beta,\delta],$$ where
\begin{equation}\label{RELB}
\begin{aligned}
\alpha(h)&=th,&&\\
\beta(h)&=t^{-1}h,&\beta(x)&=x,\\
\delta(h)&=0,&\delta(x)&=a(h)-a(th).
\end{aligned}
\end{equation}

\begin{lem}\label{CENTB}
The element $xy-a(th)\in B$ is a central element.
\end{lem}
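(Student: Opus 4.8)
The plan is to show that $xy - a(th)$ commutes with each of the generators $h^{\pm1}$, $x$, and $y$ of $B$; since these generate $B$ as a $\Bbb C$-algebra, this suffices. First I would record the basic commutation rules in $B = \Bbb F[h^{\pm1}][x;\alpha][y;\beta,\delta]$: from $\alpha(h)=th$ we get $xh = thx$; from $\beta(h)=t^{-1}h$ and $\beta(x)=x$ together with $\delta(h)=0$, $\delta(x)=a(h)-a(th)$ we get $yh = t^{-1}hy$ and $yx = xy + \delta(x) = xy + a(h) - a(th)$. Note also that $t$ is central in $B$ (it lies in the base ring $\Bbb F$), and that $a(th)$ and $a(h)$ are Laurent polynomials in $h$ with coefficients in $\Bbb F$, hence commute with $h$.

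For the generator $h$: compute $h\cdot xy = (t^{-1}xh)y = t^{-1}x(hy) = t^{-1}x(t^{-1}\cdot ?)$ — more carefully, from $xh = thx$ we get $hx = t^{-1}xh$, and from $yh = t^{-1}hy$ we get $hy = t\,yh$. Wait, I should be careful: let me instead push $h$ to the right through $xy$. From $xh = thx$, i.e. $x h = t h x$, multiplying appropriately gives $(xy)h$; using $yh = t^{-1}hy$ first, $xyh = x t^{-1} h y = t^{-1}(xh)y = t^{-1}(thx)y = h xy$, so $h$ commutes with $xy$. Since $h$ clearly commutes with $a(th)$, it commutes with $xy - a(th)$.

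For the generator $x$: I compute $x(xy - a(th))$ and $(xy - a(th))x$ and compare. Using $yx = xy + a(h) - a(th)$, we have $(xy)x = x(yx) = x(xy + a(h) - a(th)) = x\,xy + x\,a(h) - x\,a(th)$. Now $x\,a(h) = a(th)\,x$ and $x\,a(th) = a(t^2h)\,x$ (since $xh^k = t^k h^k x$ for all $k\in\Bbb Z$, hence $x f(h) = f(th) x$ for any Laurent polynomial $f$). So $(xy - a(th))x = x\,xy + a(th)x - a(t^2h)x - a(t^2h)x$... I need to track this precisely, but the point is that the ``correction'' terms $a(h)-a(th)$ coming from $yx$ and the terms coming from moving $a(th)$ past $x$ should cancel, leaving $x(xy - a(th)) = (xy - a(th))x$. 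The generator $y$ is handled symmetrically, moving $y$ past $xy$ using $yx$ and past $a(th)$ using $yh^k = t^{-k}h^k y$.

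The main obstacle is purely bookkeeping: keeping the shift operators $t^{\pm 1}$ straight when commuting $x$ and $y$ past Laurent polynomials in $h$, and verifying that the definition $\delta(x) = a(h) - a(th)$ is exactly calibrated so that the non-central parts cancel. There is no conceptual difficulty — indeed, one expects $xy - a(th)$ to be central because it is the element that, upon specializing $t\mapsto q$, becomes $xy - a(qh)$, which is zero in $A(a(h),q)$; its centrality in the "universal" algebra $B$ is the shadow of that relation. An alternative, slicker route would be to observe that $B/(t-q)B \cong A(a(h),q)$ where the image of $xy - a(th)$ is $xy - a(qh) = 0$, but that only shows the image is central in each $B_\lambda$, not that the element itself is central in $B$; so the direct computation on generators is the cleanest rigorous approach.
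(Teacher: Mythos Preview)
Your approach is exactly the paper's: the paper's proof reads in full ``It is proved routinely by (3.1),'' i.e.\ check commutation with the generators $h,x,y$ using the relations $xh=thx$, $yh=t^{-1}hy$, $yx=xy+a(h)-a(th)$. Your computation for $h$ is clean; for $x$ you have a small bookkeeping slip (the last term in your expansion of $(xy-a(th))x$ should be $-a(th)x$, not $-a(t^2h)x$), after which both sides equal $x^2y-a(t^2h)x$; the $y$ case similarly collapses to $xy^2-a(th)y$ on both sides.
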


\begin{proof}
It is proved routinely by (\ref{RELB}).
\end{proof}

 Denote by $B(a(h),q)$ the $\Bbb C$-algebra generated by
$h^{\pm1}, x,y$ subject to the relations
\begin{equation}\label{BQREL}
xh=qhx,\ \ yh=q^{-1}hy,\ \ yx=xy+a(h)-a(qh),\ \ h^{\pm1} h^{\mp1}=1,
\end{equation}
which is obtained from $B$ by substituting $q$ for $t$. The $\Bbb C$-algebra $B(a(h),q)$
is an iterated skew polynomial algebra
$$B(a(h),q)=\Bbb C[h^{\pm1}][x;\alpha_q][y;\beta_q,\delta_q],$$
where $\alpha_q,\beta_q,\delta_q$ are the maps induced by $\alpha,\beta,\delta$ respectively.
Moreover
\begin{equation}\label{GENE}
B(a(h),q)\cong B/(t-q)B=B_q
\end{equation}
as $\Bbb C$-algebras by Lemma~\ref{ITER}(2).
For each $\lambda\in{\bf K}$, $\lambda$ is not a root of unity and thus there exists the $\Bbb C$-algebra $B(a(h),\lambda)\cong B_\lambda$ which is obtained from (\ref{GENE}) by
substituting $\lambda$ for $q$. Observe that  $q$ is not only a nonzero and non-root of unity but also plays a role as a parameter taking values in ${\bf K}$.

\begin{obsn}
In $B(a(h),q)$, $q$ plays a role as a parameter taking values in ${\bf K}$ and thus, for each $\lambda\in{\bf K}$, there exists an evaluation map
\begin{equation}\label{EVAL}
e_\lambda:B(a(h),q)\longrightarrow B_\lambda,\ \  \ f(q)\mapsto f(\lambda).
\end{equation}
\end{obsn}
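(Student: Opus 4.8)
The plan is to make precise the informal claim that ``$q$ is a parameter'' by exhibiting $B=\Bbb F[h^{\pm1}][x;\alpha][y;\beta,\delta]$ itself as the universal algebra in which $q$ has been replaced by the central indeterminate $t$, and then reading each $e_\lambda$ off as the reduction of $B$ modulo a central ideal. Concretely, I would first record the presentation of $B$ coming from the iterated skew polynomial construction (\cite[Chapter 2]{GoWa2}): $B$ is a free $\Bbb C[t^{\pm1}]$-module on the monomials $h^ix^jy^k$ ($i\in\Bbb Z$, $j,k\ge0$), and as a $\Bbb C$-algebra it is generated by $t^{\pm1},h^{\pm1},x,y$ subject precisely to $t^{\pm1}$ being a central pair of mutual inverses, $h^{\pm1}$ a pair of mutual inverses, and
$$xh=thx,\qquad yh=t^{-1}hy,\qquad yx=xy+a(h)-a(th)$$
(compare (\ref{RELB})) --- these being exactly the relations (\ref{BQREL}) of $B(a(h),q)$ with the scalar $q$ replaced by the central indeterminate $t$. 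Thus $B$ \emph{is} ``$B(a(h),q)$ with $q$ left free'', which is the substance of the first half of the observation; writing an element of $B$ in the canonical form $f(q):=\sum_{i,j,k}c_{ijk}(t)\,h^ix^jy^k$ with $c_{ijk}\in\Bbb C[t^{\pm1}]$ (and ``$q$'' read as $t$) exhibits it as a Laurent-polynomial family in the parameter $q$.

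Next, for $\lambda\in{\bf K}$ the element $t-\lambda\in\Bbb F$ is central in $B$, so $(t-\lambda)B$ is a two-sided ideal and the canonical projection $e_\lambda\colon B\to B_\lambda=B/(t-\lambda)B$ is a surjective $\Bbb C$-algebra homomorphism. In the notation above, $e_\lambda$ sends $f(q)=\sum c_{ijk}(t)h^ix^jy^k$ to $\sum c_{ijk}(\lambda)\,\overline{h}^i\overline{x}^j\overline{y}^k$, that is, to ``$f(\lambda)$'' (meaningful because $\lambda\neq0$, so $t\mapsto\lambda$ is a well-defined evaluation $\Bbb C[t^{\pm1}]\to\Bbb C$). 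Since $\lambda\in{\bf K}$ is not a root of unity, Lemma~\ref{ITER}(2) --- equivalently the isomorphism (\ref{GENE}) with $\lambda$ in place of $q$ --- identifies $B_\lambda$ with $B(a(h),\lambda)$, and composing yields the asserted evaluation map $e_\lambda\colon B(a(h),q)\to B(a(h),\lambda)$, ``$q\mapsto\lambda$'' (for $\lambda=q$ this is just the defining identification $B\to B_q\cong B(a(h),q)$).

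I expect no computational obstacle; the single point needing care --- and worth stating explicitly --- is that ``$f(q)\mapsto f(\lambda)$'' does \emph{not} define a map out of the fixed algebra $B_q$: two formulas $f,g$ with $f(q)=g(q)$ in $B_q$ may have $f(\lambda)\neq g(\lambda)$ in $B_\lambda$. Hence $e_\lambda$ must be read, as above, as a homomorphism defined on the family $B$ (equivalently, on chosen polynomial-in-$q$ representatives), not on $B_q$ itself. Granting this reading, the verification is immediate from the PBW presentation of $B$ together with the fact that a quotient of an algebra by a central ideal is an algebra homomorphism.
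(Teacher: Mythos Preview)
Your formalization is correct and is precisely what the paper intends; the paper itself offers no proof for this Observation beyond the preceding sentence that ``$q$ is not only a nonzero and non-root of unity but also plays a role as a parameter taking values in ${\bf K}$.'' Your reading --- realize $B$ as the universal family with central parameter $t$, then obtain each $e_\lambda$ as the quotient $B\to B_\lambda$ and use Lemma~\ref{ITER}(2)/(\ref{GENE}) to identify the targets --- is exactly how the paper subsequently uses the map: the induced $\widehat{\ \ }:B(a(h),q)\to\widehat B$ built from the $e_\lambda$ is explicitly flagged there as ``not a $\Bbb C$-linear map'' and is noted to have the same image as $\gamma_B$, which is your point that $e_\lambda$ really lives on $B$ (on polynomial-in-$t$ representatives) rather than on the abstract algebra $B_q$. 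So there is nothing to compare; if anything, you have stated more clearly than the paper that ``$f(q)\mapsto f(\lambda)$'' must be read on chosen representatives and not as a $\Bbb C$-algebra homomorphism out of $B_q$.
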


Note that  the 5-tuples $({\bf K},\Bbb F, \Bbb F[h^{\pm1}], t-1, (\alpha,0))$ and $({\bf K},\Bbb F,\Bbb F[h^{\pm1}][x;\alpha], t-1, (\beta,\delta))$ satisfy  Notation~\ref{ASSUM}(1)-(5).
Applying Theorem~\ref{SC} to $\Bbb F[h^{\pm1}][x;\alpha]$ and $B$, there exists the Poisson $\Bbb C$-algebra
$$B_1=\Bbb C[h^{\pm1}][x;\alpha_1]_p[y;\beta_1,\delta_1]_p,$$ where
\begin{equation}\label{BRAC}
\begin{aligned}
\alpha_1(h)&=h,&&\\
\beta_1(h)&=-h,&\beta_1(x)&=0,\\
\delta_1(h)&=0,&\delta_1(x)&=-a'(h)h,
\end{aligned}
\end{equation}
here $a'(h)$ is the formal derivative of $a(h)$.

Define a $\Bbb C$-algebra homomorphism $\gamma_B$   by
$$\gamma_B:B\longrightarrow \widehat{B}=\prod_{\lambda\in{\bf K}}B_\lambda,\ \ \ \gamma_B(z)=(\overline{z})_{\lambda\in{\bf K}}.$$
Applying Lemma~\ref{INJ} to $\gamma_B$ inductively, $\gamma_B$ is injective by  Lemma~\ref{FIR} and thus, by Theorem~\ref{GAMMA},  there exists the $\Bbb C$-algebra homomorphism
 $$\Gamma_B:\gamma_B(B)\longrightarrow B_1,\ \ \ \Gamma_B=\gamma_1\gamma_B^{-1},$$
 where $\gamma_1:B\longrightarrow B_1=B/(t-1)B$ is the canonical projection.

Define a map (not $\Bbb C$-linear map)
$$\widehat{}\ :B(a(h),q)\longrightarrow \widehat{B}=\prod_{\lambda\in{\bf K}}B_\lambda,\ \ \  f(q)\mapsto \widehat{f(q)}:=(f(\lambda))_{\lambda\in{\bf K}}$$
which exists by (\ref{EVAL}).
Note that the image of \ $\widehat{}$ \ is equal to the image of $\gamma_B$.
Hence there exists  the composition $\Gamma:=\Gamma_B\circ\widehat{}\ \ .$
Roughly speaking, $\Gamma$ is a map defined by plugging 1 to $q$.
\begin{equation}\label{GAMM}
\Gamma:B(a(h),q){\longrightarrow} \widehat{B}=\prod_{\lambda\in{\bf K}}B_\lambda\longleftarrow B\longrightarrow B_1.
\end{equation}
Note that $\Gamma$ is surjective.
By Lemma~\ref{CENT} and Lemma~\ref{CENTB}, $\Gamma(xy-a(qh))=xy-a(h)$ is a Poisson central element of $B_1$.
Thus $(xy-a(h))B_1$ is a Poisson ideal.
Set
$$A_1:=B_1/(xy-a(h))B_1.$$

Henceforth, we simply write $w$ for $\overline{w}\in A_1$.

\begin{thm}
(1) The quantum generalized Weyl algebra $A(a(h),q)$ is equal to the factored $\Bbb C$-algebra $B(a(h),q)/(xy-a(qh))B(a(h),q)$.

(2) The Poisson algebra $A_1$ is induced from $A(a(h),q)$ by $\Gamma$ in (\ref{GAMM}).

(3) The Poisson  algebra $A_1$  is the $\Bbb C$-algebra $\Bbb C[h^{\pm1},x,y]/\langle xy-a(h)\rangle$ with Poisson bracket
\begin{equation}\label{OH}
\{x,h\}=hx,\ \ \{y,h\}=-hy,\ \ \{y,x\}=-a'(h)h.
\end{equation}

We will call $A_1$ a {\it Poisson generalized  Weyl algebra}.
\end{thm}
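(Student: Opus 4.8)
The strategy is to settle the three parts in turn, each being a direct unwinding of the constructions already in place; no new machinery is needed. For (1) I would work at the level of presentations by generators and relations. By definition $B(a(h),q)$ is the quotient of the free $\Bbb C$-algebra $\Bbb C\langle h,h^{-1},x,y\rangle$ by the two-sided ideal $I$ generated by the relations (\ref{BQREL}), so $B(a(h),q)/(xy-a(qh))B(a(h),q)$ is the quotient by $I+(xy-a(qh))$. Adjoining $xy=a(qh)$ to $yx=xy+a(h)-a(qh)$ yields $yx=a(h)$, and conversely $yx-a(h)$ together with $xy-a(qh)$ recovers $yx-xy-a(h)+a(qh)$; hence $I+(xy-a(qh))$ is exactly the two-sided ideal generated by $xh-qhx$, $yh-q^{-1}hy$, $xy-a(qh)$, $yx-a(h)$ and $h^{\pm1}h^{\mp1}-1$, which is the defining ideal of $A(a(h),q)$. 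This gives the asserted equality.

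For (2) I would invoke that $\Gamma\colon B(a(h),q)\longrightarrow B_1$ is a surjective $\Bbb C$-algebra homomorphism with $\Gamma(xy-a(qh))=xy-a(h)$, both established above. Surjectivity of $\Gamma$ forces $\Gamma\bigl((xy-a(qh))B(a(h),q)\bigr)=(xy-a(h))B_1$, so $\Gamma$ descends to a surjective $\Bbb C$-algebra homomorphism from $B(a(h),q)/(xy-a(qh))B(a(h),q)$ onto $B_1/(xy-a(h))B_1=A_1$; by (1) its source is $A(a(h),q)$. Since $A_1$ carries the Poisson structure of the semiclassical limit $B_1$, this is precisely the statement that $A_1$ is the Poisson algebra induced from $A(a(h),q)$ by $\Gamma$.

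For (3) I would simply read off the commutative structure and the bracket. As a commutative $\Bbb C$-algebra, $B_1=\Bbb C[h^{\pm1}][x;\alpha_1]_p[y;\beta_1,\delta_1]_p$ is the iterated polynomial algebra $\Bbb C[h^{\pm1},x,y]$, so $A_1=B_1/(xy-a(h))B_1$ equals $\Bbb C[h^{\pm1},x,y]/\langle xy-a(h)\rangle$ as a commutative algebra. Because $(xy-a(h))B_1$ is a Poisson ideal by Lemma~\ref{CENT} and Lemma~\ref{CENTB}, the projection $B_1\longrightarrow A_1$ is a homomorphism of Poisson algebras, so the bracket on $A_1$ is obtained by evaluating the bracket of $B_1$ on the generators $h,x,y$. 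From the Poisson Ore extension rule $\{z,a\}=\alpha(a)z+\delta(a)$ and (\ref{BRAC}) one computes, in $B_1$, $\{x,h\}=\alpha_1(h)x=hx$, $\{y,h\}=\beta_1(h)y+\delta_1(h)=-hy$ and $\{y,x\}=\beta_1(x)y+\delta_1(x)=-a'(h)h$; projecting to $A_1$ gives (\ref{OH}). Since $h^{\pm1},x,y$ generate $A_1$ and $\{h^{-1},-\}=-h^{-2}\{h,-\}$, these three values determine the entire Poisson structure.

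None of the steps is long, so there is no serious obstacle; the point that most deserves care is the identification in (1), which should be carried out as the displayed equality of two-sided ideals in the free algebra rather than by informally ``simplifying the relations''. One should also keep in mind the elementary fact underlying (\ref{BRAC}) that is used in (3), namely $\overline{(t-1)^{-1}(t^i-1)}=i$ for every $i\in\Bbb Z$, which gives $\delta_1(x)=\overline{(t-1)^{-1}(a(h)-a(th))}=-a'(h)h$; but this is needed only if (\ref{BRAC}) is re-derived here rather than quoted.
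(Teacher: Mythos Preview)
Your argument matches the paper's approach part for part; the paper's proof is in fact just the three one-line references ``It follows by (\ref{BQREL})'', ``It follows by the facts that $\Gamma(B(a(h),q))=B_1$ and $\Gamma(xy-a(qh))=xy-a(h)$'', and ``Since $B_1=\Bbb C[h^{\pm1},x,y]$, the result follows by (\ref{BRAC})'', and you have simply unpacked each of these. One small point of care in (2): you assert that $\Gamma$ is a $\Bbb C$-algebra homomorphism ``established above'', but the paper explicitly flags that the map $\widehat{\ }$ is \emph{not} $\Bbb C$-linear (since $q$ is being treated as a parameter rather than a fixed scalar), so $\Gamma=\Gamma_B\circ\widehat{\ }$ is not literally a $\Bbb C$-algebra homomorphism either; the paper's own proof of (2) accordingly stops at the two facts $\Gamma(B(a(h),q))=B_1$ and $\Gamma(xy-a(qh))=xy-a(h)$ and does not claim a descended homomorphism. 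This does not affect your conclusions, but you should phrase (2) at that same level of informality rather than invoking a homomorphism property that has not been (and cannot quite be) established.
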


\begin{proof}
(1) It follows by (\ref{BQREL}).

(2) It follows by the facts that $\Gamma(B(a(h),q))=B_1$ and $\Gamma(xy-a(qh))=xy-a(h)$.

(3) Since $B_1=\Bbb C[h^{\pm1},x,y]$, the result follows by (\ref{BRAC}).
\end{proof}

Let $R$ be a Poisson algebra. If there exists subspaces $R_k$, $k\in\Bbb Z$, such that $R=\bigoplus_{k\in\Bbb Z}R_k$ and
$$R_kR_\ell\subseteq R_{k+\ell},\ \ \{R_k,R_\ell\}\subseteq R_{k+\ell}$$ for all $k,\ell\in\Bbb Z$ then $R$ is said to be a {\it $\Bbb Z$-graded Poisson algebra}.

Give degrees on the generators $h,x,y$ of $B_1$ by $\text{deg}(h)=0$, $\text{deg}(x)=1$ and $\text{deg}(y)=-1$. Then
$B_1$ is a $\Bbb Z$-graded Poisson algebra. Moreover the  Poisson ideal $\langle xy-a(h)\rangle$ is graded. Thus $A_1$ is also a $\Bbb Z$-graded Poisson algebra
\begin{equation}\label{GRA}
A_1=\bigoplus_{k\in\Bbb Z}W_k,
\end{equation}
where
$$W_k=\left\{\begin{aligned} &\Bbb C[h^{\pm1}]x^k&&\text{if }k>0,\\ &\Bbb C[h^{\pm1}]&&\text{if }k=0,\\  &\Bbb C[h^{\pm1}]y^{-k}&&\text{if }k<0.\end{aligned}\right.$$

\begin{lem}\label{EIG}
Define a $\Bbb C$-linear map $f$ by
 $$f:A_1\longrightarrow A_1,\ \ \  f(a)=\{a,h\}h^{-1}.$$
Then, for each $k\in\Bbb Z$, every nonzero element of $W_k$ is an eigenvector of $f$ with eigenvalue $k$.
\end{lem}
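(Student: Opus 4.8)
The plan is to verify the eigenvalue claim directly on the homogeneous pieces by exploiting the Poisson relations \eqref{OH}. First I would observe that $f$ is a well-defined $\Bbb C$-linear map on $A_1$: indeed $\{-,h\}$ is a $\Bbb C$-linear derivation (for the Poisson bracket) and multiplication by $h^{-1}$ is $\Bbb C$-linear, so $f$ is their composite. It therefore suffices to check the eigenvalue equation on a spanning set of $W_k$, and since $W_k=\Bbb C[h^{\pm1}]x^k$ for $k>0$, $W_0=\Bbb C[h^{\pm1}]$, and $W_k=\Bbb C[h^{\pm1}]y^{-k}$ for $k<0$, the natural spanning sets are the monomials $h^mx^k$, $h^m$, and $h^my^{-k}$ respectively with $m\in\Bbb Z$.

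Next I would compute $f$ on each such monomial. For $k=0$ the element is $h^m$, and $\{h^m,h\}=0$ since $\{h,h\}=0$ and $\{-,h\}$ is a derivation; hence $f(h^m)=0=0\cdot h^m$, giving eigenvalue $0$. For $k>0$, using the Leibniz rule for $\{-,h\}$ together with $\{h,h\}=0$ and $\{x,h\}=hx$ from \eqref{OH}, one gets $\{h^mx^k,h\}=h^m\{x^k,h\}=h^m\cdot k\,x^{k-1}\{x,h\}=k\,h^m x^{k-1}hx=k\,h^{m+1}x^k$, where the last step uses that $h$ is Poisson-central-free of obstruction here — more precisely that $h$ commutes with $x$ in the commutative algebra $A_1$, so $x^{k-1}hx=hx^k$. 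Multiplying by $h^{-1}$ yields $f(h^mx^k)=k\,h^mx^k$. The case $k<0$ is identical using $\{y,h\}=-hy$: writing $\ell=-k>0$, $\{h^my^{\ell},h\}=h^m\ell\,y^{\ell-1}\{y,h\}=-\ell\,h^{m+1}y^{\ell}$, so $f(h^my^\ell)=-\ell\,h^my^\ell=k\,h^my^\ell$. Thus every monomial basis element of $W_k$ is an eigenvector with eigenvalue $k$.

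Finally, a general nonzero element $w\in W_k$ is a $\Bbb C$-linear combination of such monomials, all lying in the single eigenspace for eigenvalue $k$; by linearity of $f$ we get $f(w)=k\,w$, so $w$ is itself an eigenvector with eigenvalue $k$. I do not expect any real obstacle here: the only point requiring a little care is making sure the relation $xy=a(h)$ in $A_1$ does not interfere, but since each $W_k$ is a free $\Bbb C[h^{\pm1}]$-module on a single generator ($x^k$, $1$, or $y^{-k}$) by the grading decomposition \eqref{GRA}, the computation stays entirely within that generator and never needs to reduce a product $xy$. So the main (very mild) step is just the bookkeeping of the Leibniz rule on powers $x^k$ and $y^{-k}$ using \eqref{OH}.
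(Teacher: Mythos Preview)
Your proof is correct. The paper actually states Lemma~\ref{EIG} without proof, treating it as a routine verification from the relations~\eqref{OH} and the grading~\eqref{GRA}; your direct computation via the Leibniz rule on the monomial generators $h^m x^k$, $h^m$, and $h^m y^{-k}$ of each $W_k$ is exactly the expected check.
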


Note that $\Bbb C[h^{\pm1}]$ is a unique factorization domain since it is a principal ideal domain.

\begin{thm}\label{SIM}
The Poisson algebra $A_1$ is Poisson simple if and only if every root of $a(h)$ is a simple root.
\end{thm}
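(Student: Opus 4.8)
The plan is to analyze Poisson ideals of $A_1$ directly using the $\Bbb Z$-grading and the eigenvector structure furnished by Lemma~\ref{EIG}. First I would show that any nonzero Poisson ideal $I$ of $A_1$ is graded: given $0\neq a=\sum_{k} a_k\in I$ with $a_k\in W_k$, applying the derivation $f=\{-,h\}h^{-1}$ repeatedly and using that $f$ acts on $W_k$ as multiplication by $k$ (Lemma~\ref{EIG}), a Vandermonde argument forces each homogeneous component $a_k$ into $I$. Hence $I=\bigoplus_k (I\cap W_k)$. Next, since each $W_k$ is a free rank-one $\Bbb C[h^{\pm1}]$-module and $I\cap W_0$ is an ideal of $\Bbb C[h^{\pm1}]$, one extracts a nonzero element of $\Bbb C[h^{\pm1}]$ lying in $I$: if $0\neq a_k\in I\cap W_k$ with $k>0$ then $\{y,a_k x^k\}$ or rather $y\cdot(a_k x^k)=a_k y x^k$ together with the relation $xy=a(h)$ drags us down in degree, so multiplying $a_k x^k$ by $y^k$ and using $xy=a(h)$ gives $a_k a(h)^k\in I\cap W_0$ (and symmetrically for $k<0$). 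Thus in all cases $I\cap\Bbb C[h^{\pm1}]\neq 0$.

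So it suffices to understand which Poisson ideals of $A_1$ meet $\Bbb C[h^{\pm1}]$ nontrivially. Let $0\neq g(h)\in I\cap\Bbb C[h^{\pm1}]$; I would take $g$ of minimal degree (in $h$, up to units), so $g$ generates the ideal $I\cap\Bbb C[h^{\pm1}]$ of the PID $\Bbb C[h^{\pm1}]$. The bracket $\{x,g(h)\}$: since $\{x,h\}=hx$ and the bracket is a derivation in each slot, $\{x,g(h)\}=g'(h)h\,x\in I$, whence $g'(h)h\in I\cap\Bbb C[h^{\pm1}]$ by regrading, so $g\mid g'h$ in $\Bbb C[h^{\pm1}]$. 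Because $h$ is a unit, $g\mid g'$, which forces $\deg g'<\deg g$ unless $g'=0$; comparing degrees, $g$ must be (a unit times) a power of a single linear factor $h-c$ with $c\neq 0$ — more precisely, in $\Bbb C[h^{\pm1}]$ the condition $g\mid g'$ forces $g$ to be a unit multiple of $\prod_i (h-c_i)$ with each $c_i$ appearing, i.e. $g$ is squarefree, OR $g$ is a unit (giving $I=A_1$). Running this carefully: $g\mid g'$ with $\deg g'=\deg g-1$ is impossible for nonconstant $g$, so actually $g'=0$ forces $g$ a unit, \emph{unless} we only get $g\mid g'h$ and exploit that $g$ and $h$ are coprime. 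I would phrase it as: the radical-type condition $g\mid g'h \Rightarrow g \text{ is squarefree up to units}$, and then push further using $\{y,-\}$ and $xy=a(h)$ to bring $a(h)$ into the picture.

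The genuine content is the interplay with $a(h)$, and this is where I expect the main obstacle. Suppose $a(h)$ has a multiple root $c$ (with $c\neq 0$ since $a(h)\in\Bbb C[h^{\pm1}]$), say $a(h)=(h-c)^m b(h)$ with $m\geq 2$, $b(c)\neq 0$. Then I want to produce a proper nonzero Poisson ideal. The natural candidate is the ideal generated by $h-c$, $x$, $y$ — in $A_1=\Bbb C[h^{\pm1},x,y]/\langle xy-a(h)\rangle$, the relation $xy=a(h)$ at $h=c$ reads $xy=0$, which is consistent, and one checks via (\ref{OH}) that $\{x,h-c\}=hx$, $\{y,h-c\}=-hy$, $\{y,x\}=-a'(h)h$ all lie in $\langle h-c,x,y\rangle$ — the last because $a'(h)=(h-c)^{m-1}(mb(h)+(h-c)b'(h))$ has $m-1\geq 1$, so $a'(h)\in\langle h-c\rangle$. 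This gives a proper Poisson ideal, so $A_1$ is not Poisson simple. Conversely, if every root of $a(h)$ is simple, I return to the $g$ extracted above: refining the divisibility chain, one shows $g$ must actually be forced to be a unit. The key extra input is that from $0\neq a_{-k} y^k \in I$ one gets (multiplying by $x^k$) $a_{-k}a(h)^k\in I\cap\Bbb C[h^{\pm1}]$, and combined with the squarefree constraint on the generator $g$ and with the bracket relations involving $a'(h)$, the simplicity of the roots of $a(h)$ (so $\gcd(a,a')=1$) leaves no room for a nonunit $g$; chasing the ideal back up through the grading then gives $I=A_1$. The delicate point to get right is the bookkeeping showing that every nonzero Poisson ideal, after descending to $\Bbb C[h^{\pm1}]$, is forced to contain a unit precisely when $a$ is squarefree, and I would isolate that as the crux of the argument.
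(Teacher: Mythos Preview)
Your $(\Rightarrow)$ direction is correct and matches the paper's argument: the paper takes the ideal $\langle x,y,b(h)\rangle$ with $b=\gcd(a,a')$, you take $\langle x,y,h-c\rangle$ for a single multiple root $c$; either works.

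For $(\Leftarrow)$ there is a genuine gap. From $\{x,g(h)\}=g'(h)hx\in I$ you assert ``whence $g'(h)h\in I\cap\Bbb C[h^{\pm1}]$ by regrading''. This does not follow: $g'(h)hx$ lies in $I\cap W_1$, not in $I\cap W_0$, and there is no way to cancel the factor $x$ from an arbitrary ideal. (Your subsequent unease --- ``Running this carefully\ldots unless we only get $g\mid g'h$'' --- is a symptom; note that in $\Bbb C[h^{\pm1}]$ the element $h$ is a unit, so $g\mid g'h$ and $g\mid g'$ are the same condition, and the latter would instantly force $g$ to be a unit. The problem is that you have not established either one.) If you try to push down honestly, multiplying by $y$ gives only $g'(h)h\,a(h)\in I\cap W_0$, i.e.\ $g\mid g'a$, and without knowing $\gcd(g,a)=1$ you cannot conclude $g\mid g'$; the roots of $g$ could well lie among the (simple) roots of $a$.

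The paper resolves this by not working with $I$ directly: it takes a minimal prime $P$ over $I$, which is still a Poisson ideal by a cited result of Goodearl, and then exploits \emph{primality}. From a homogeneous element in $P\cap W_k$ (obtained exactly as you do, via the eigenvector map $f$), primality gives either $x\in P$ or some $0\neq c(h)\in P$. The case $x\in P$ is ruled out because then $a(h)=xy\in P$ and $a'(h)h=\{x,y\}\in P$ force the unit $\gcd(a,a')$ into $P$. In the remaining case $x\notin P$, primality lets you cancel $x$ from $c'(h)hx\in P$ to obtain $c'(h)\in P$, and iterated differentiation drops the degree to zero. This passage to a prime is precisely the missing ingredient in your sketch; without it the cancellation step fails.
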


\begin{proof}
Let $b(h)\in\Bbb C[h^{\pm1}]$ be the greatest common divisor of $a(h)$ and $a'(h)$. Note that $a(h)$ has a  root with multiplicity $>1$ if and only if $b(h)$ is a nonunit.

$(\Rightarrow)$ Suppose that $a(h)$ has a  root with multiplicity $>1$. Thus $b(h)$ is a  nonunit. Let $I$ be the ideal of $A_1$ generated by $x,y$ and $b(h)$. Then $A_1/I$ is isomorphic to the algebra $\Bbb C[h^{\pm1}, x,y]/J$, where $J$ is the ideal of $\Bbb C[h^{\pm1}, x,y]$ generated by $x,y$ and $b(h)$, since $a(h)$ is divided by $b(h)$. Moreover $\Bbb C[h^{\pm1}, x,y]/J$ is isomorphic to $\Bbb C[h^{\pm1}]/b(h)\Bbb C[h^{\pm1}]$. Thus $I$ is a nontrivial ideal of $A_1$. Observe that
$$\begin{aligned}
\{x, b(h)\}&=b'(h)\{x,h\}=b'(h)a'(h)hx\in I,\\
 \{y, b(h)\}&=b'(h)\{y,h\}=-b'(h)a'(h)hy\in I\\
 \{y,x\}&=-a'(h)h\in I\end{aligned}$$
by the chain rule and (\ref{OH}). Hence $I$ is a nontrivial Poisson ideal of $A_1$ and thus $A_1$ is not Poisson simple.

$(\Leftarrow)$  Suppose that $A_1$ is not Poisson simple. Then there exists a nontrivial Poisson ideal $I$ of $A_1$. Let $P$ be a minimal prime ideal over $I$. By \cite[6.2]{Good4}, $P$ is a prime Poisson ideal.
Applying $f$ to $P$,  $P$ contains a nonzero element $w_k\in W_k$ for some $k\in\Bbb Z$ by Lemma~\ref{EIG}. If $k\geq0$ then $x\in P$ or $c(h)\in P$ for some $0\neq c(h)\in\Bbb C[h^{\pm1}]$
since $P$ is prime.
If $x\in P$ then $a'(h)h=\{x,y\}\in P$ and thus $a(h)\in P$ and $a'(h)\in P$. It follows that the greatest common divisor $b(h)$ of $a(h)$ and $a'(h)$ is an element of $P$ and thus $P$ contains a unit, a contradiction.
Similarly, if $k<0$ then repeating the argument gives that $y\notin P$ and that $P$ contains  an element $0\neq c(h)\in\Bbb C[h^{\pm1}]$.

Let  $0\neq c(h)\in P$ and $x,y\notin P$.
We may assume  $c(h)\in\Bbb C[h]$ by multiplying $h^n$ to $c(h)$ for sufficiently large $n$.
If the degree of $c(h)$ is greater than 1 then $0\neq c'(h)\in P$ since $$P\ni\{x,c(h)\}=c'(h)hx.$$ Repeating this process, we get that $P$ contains a unit, a contradiction. Hence $A_1$ is Poisson simple.
\end{proof}

Let us find the Poisson endomorphisms of $A_1$. The following arguments are  Poisson analogues of those in \cite{KiLa}. For completion, we repeat them.
Note that the unit group of $A_1$ is $\{\gamma h^i|\gamma\in\Bbb C^*,i\in\Bbb Z\}$.
Let $\psi$ be a Poisson endomorphism of $A_1$. Then $\psi(h)=\gamma h^i$ for some $\gamma\in\Bbb C^*$ and $i\in \Bbb Z$ since  Poisson endomorphisms preserve units.
Hence there are possible three types of Poisson endomorphisms of $A_1$ as in \cite{KiLa}. Positive-type Poisson endomorphisms, that is, Poisson endomorphisms $\psi$ such that
$\psi(h)=\gamma h^i$ $(i>0)$; Zero-type Poisson endomorphisms, that is, Poisson endomorphisms $\psi$ such that
$\psi(h)=\gamma$; Negative-type Poisson endomorphisms, that is, Poisson endomorphisms $\psi$ such that
$\psi(h)=\gamma h^i$ $(i<0)$.

In the following theorem, we see that  the positive-type Poisson endomorphisms and the negative-type Poisson endomorphisms of $A_1$ are the same forms as those of endomorphisms of $A(a(h),q)$ but the zero-type Poisson endomorphisms of $A_1$ are slightly different from those of $A(a(h),q)$.
(One should compare the following theorem with \cite[Proposition 3.1, Proposition 4.1 and Proposition 5.3]{KiLa}.)

\begin{thm}
Let $d$ be the maximal degree of $a(h)$.  Write $a(h)$ by
$$a(h)=a_{i_1}h^{i_1}+a_{i_2}h^{i_2}+\ldots+a_{i_m}h^{i_m},$$
where $i_1<i_2<\ldots<i_m=d$ and $a_{i_j}\in\Bbb C^*$ for all $j=1,2,\ldots, m$.
Denote by $k$  the greatest common divisor of $d-i_1, d-i_2,\ldots, d-i_{m-1}$.

(1) Let $\psi$ be a positive-type Poisson endomorphism of $A_1$. Then
\begin{equation}\label{PENDO}
\psi(h)=\gamma h, \ \ \ \psi(x)=bh^nx,\ \ \ \psi(y)=\gamma^db^{-1}h^{-n}y,
\end{equation}
where $\gamma$ is a $k$-th root of unity, $b\in\Bbb C^*$ and $n\in\Bbb Z$. Conversely, a map $\psi$ satisfying (\ref{PENDO}) determines a unique  Poisson automorphism of $A_1$.

(2) Let $\psi$ be a zero-type Poisson endomorphism of $A_1$. Then
\begin{equation}\label{ZENDO}
\psi(h)=\gamma, \ \ \ \psi(x)=0,\ \ \ \psi(y)=0,
\end{equation}
where $\gamma\in\Bbb C^*$ is a root of $a(h)$ with multiplicity $>1$. If $a(h)$ has a root with multiplicity $>1$ then  a map $\psi$ satisfying (\ref{ZENDO}) determines a unique Poisson endomorphism. If
every root of $a(h)$ has multiplicity 1 then there are no zero-type Poisson endomorphisms.

(3) Let $\psi$ be a negative-type Poisson endomorphism of $A_1$. Then
\begin{equation}\label{NENDO}
\psi(h)=\gamma h^{-1}, \ \ \ \psi(x)=ch^vy,\ \ \ \psi(y)=bh^{u}x,
\end{equation}
where $\gamma,b,c\in\Bbb C^*$ and $u,v\in\Bbb Z$ satisfy the relation
\begin{equation}\label{AENDO}
bch^{u+v}a(h)=a(\gamma h^{-1}).
\end{equation}
Conversely, a map $\psi$ satisfying (\ref{NENDO}) determines a unique  Poisson automorphism of $A_1$.
\end{thm}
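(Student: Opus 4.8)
The plan is to exploit the $\Bbb Z$-grading (\ref{GRA}) of $A_{1}$ together with the operator $f$ of Lemma~\ref{EIG}, mirroring the arguments of Kitchin--Launois. As recalled before the theorem, any Poisson endomorphism $\psi$ satisfies $\psi(h)=\gamma h^{i}$ for some $\gamma\in\Bbb C^{*}$ and $i\in\Bbb Z$. First I would record the intertwining identity
$$\psi(f(a))=\psi(\{a,h\})\psi(h)^{-1}=\{\psi(a),\gamma h^{i}\}\gamma^{-1}h^{-i}=i\,f(\psi(a))\qquad(a\in A_{1}),$$
which uses $\{b,h^{i}\}=ih^{i-1}\{b,h\}$, valid for every $i\in\Bbb Z$. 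Since $f$ acts on $W_{k}$ as multiplication by $k$, this forces $\psi(W_{k})\subseteq W_{k/i}$ when $i\neq0$ (read as $0$ unless $i\mid k$) and $\psi(W_{k})=0$ for all $k\neq0$ when $i=0$. Applying this to $x\in W_{1}$ and $y\in W_{-1}$, and noting that $\psi(x)\psi(y)=a(\psi(h))=a(\gamma h^{i})$ is a \emph{nonzero} element of $\Bbb C[h^{\pm1}]$ whenever $i\neq0$ (because $a(h)\neq0$), one gets: in the positive case necessarily $i=1$, with $\psi(x)=p(h)x$, $\psi(y)=s(h)y$ and $0\neq p,s\in\Bbb C[h^{\pm1}]$; in the negative case necessarily $i=-1$, with $\psi(x)=c(h)y$, $\psi(y)=b(h)x$ and $0\neq b,c\in\Bbb C[h^{\pm1}]$; and in the zero case $\psi(x)=\psi(y)=0$.

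Next I would feed these forms into the two defining relations $\psi(x)\psi(y)=a(\psi(h))$ and, from (\ref{OH}), $\{\psi(y),\psi(x)\}=-a'(\psi(h))\psi(h)$. In the zero case the first gives $a(\gamma)=0$ and the second $a'(\gamma)\gamma=0$, so $\gamma$ is a root of $a$ of multiplicity $>1$; this yields (\ref{ZENDO}) and proves (2) (and when all roots of $a$ are simple, no such $\gamma$ exists). In the positive case the first relation reads $p(h)s(h)a(h)=a(\gamma h)$; comparing the top-degree and bottom-degree terms of both sides (no cancellation can occur at these extremes over the domain $\Bbb C[h^{\pm1}]$) forces $p$ and $s$ to be Laurent monomials $p=bh^{n}$, $s=b'h^{n'}$ with $n+n'=0$, after which equating coefficients in $bb'a(h)=a(\gamma h)$ gives $bb'=\gamma^{d}$ (hence $b'=\gamma^{d}b^{-1}$) and $\gamma^{d-i_{j}}=1$ for $j<m$, i.e. $\gamma^{k}=1$; this is (\ref{PENDO}). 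The negative case runs the same way: $b(h)c(h)a(h)=a(\gamma h^{-1})$, the same degree count makes $b,c$ Laurent monomials $b=bh^{u}$, $c=ch^{v}$, and the relation then becomes precisely (\ref{AENDO}); this is (\ref{NENDO}).

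For the converse statements I would check that each prescribed map is a well-defined Poisson endomorphism, and in the positive and negative cases a Poisson automorphism. Well-definedness as a $\Bbb C$-algebra homomorphism reduces to the invertibility of $\psi(h^{\pm1})$ (clear) and the identity $\psi(xy-a(h))=0$, which is exactly $\gamma^{d}a(h)=a(\gamma h)$ $(\Leftrightarrow\gamma^{k}=1)$, or $a(\gamma)=0$, or (\ref{AENDO}), according to the type. Poisson compatibility need only be verified on the three relations (\ref{OH}); those for $\{x,h\}$ and $\{y,h\}$ are immediate, and the one for $\{y,x\}$ is obtained by differentiating $\psi(a(h))=a(\psi(h))$ and using the chain rule, which shows $\{\psi(y),\psi(x)\}=-a'(\psi(h))\psi(h)$ to be an automatic consequence of $\psi(x)\psi(y)=a(\psi(h))$, so no extra constraint on the parameters appears. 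Finally, in the positive and negative cases $h,x,y$ all lie in the image of $\psi$ (read off from (\ref{PENDO}) and (\ref{NENDO})), so $\psi$ is surjective, hence bijective since a surjective endomorphism of the Noetherian ring $A_{1}$ is injective, and the inverse of a Poisson isomorphism is again Poisson; uniqueness is automatic because a $\Bbb C$-algebra endomorphism is determined by the images of the generators.

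The step I expect to be the main obstacle is the $\{y,x\}$-compatibility check: arranging the chain-rule bookkeeping — ideally once, uniformly over all three types — so that it visibly collapses onto the single algebra relation $\psi(x)\psi(y)=a(\psi(h))$ rather than producing spurious constraints. A secondary nuisance is making the "no cancellation at the extreme monomials" degree argument fully rigorous over $\Bbb C[h^{\pm1}]$, which is what pins the multipliers $p,s$ (resp. $b,c$) down to single Laurent monomials.
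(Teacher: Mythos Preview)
Your proposal is correct and follows essentially the same approach as the paper: use the $\Bbb Z$-grading (via the operator $f$ of Lemma~\ref{EIG}) to force the shape of $\psi(x),\psi(y)$, then the relation $xy=a(h)$ together with a top/bottom degree comparison in $\Bbb C[h^{\pm1}]$ to pin down the coefficients, and finally verify the converse by checking the relations (\ref{OH}). Your execution differs only in peripheral details---you get $\psi(x),\psi(y)\neq0$ from $xy=a(h)$ rather than from $\{y,x\}=-a'(h)h$, you reduce $b(h),c(h)$ to monomials in the negative case by a direct degree count rather than by invoking that $\psi^{2}$ is of positive type, and you obtain bijectivity from surjectivity plus Noetherianity of $A_1$ rather than by writing down explicit inverses---but these are cosmetic variations, not a different route.
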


\begin{proof}
Note that every Poisson endomorphism $\psi$ of $A_1$ preserves the following equations
\begin{equation}\label{RELBB}
\{x,h\}=hx,\ \ \{y,h\}=-hy,\ \ \{y,x\}=-a'(h)h,\ \ xy=a(h).
\end{equation}

(1) Let $\psi$ be a Poisson endomorphism such that $\psi(h)=\gamma h^j$ for $\gamma\in\Bbb C^*$ and $j>0$. Suppose that $\psi(x)=0$. Applying $\psi$ to the third equation of (\ref{RELBB}), the left hand side is zero and the right hand side is
 $-\gamma a'(\gamma h^j)h^j$ which is nonzero, a contradiction. Hence $\psi(x)\neq 0$. Similarly $\psi(y)\neq 0$.
 We can set $\psi(x)=\sum_{k\in\Bbb Z} w_k$ by (\ref{GRA}), where $w_k\in W_k$ for each $k$. Applying $\psi$ to $\{x,h\}=hx$, we have
$$\{\psi(x), \gamma h^j\}=\gamma h^j\psi(x).$$
The left hand side of the above equation is $\sum_k jk\gamma h^jw_k$ and thus $jk=1$ for all $k$ such that $w_k\neq0$. Thus we have that
$$\psi(h)=\gamma h,\ \ \ \psi(x)=b(h)x$$
for some $0\neq b(h)\in\Bbb C[h^{\pm1}]$. Repeating this argument on $\{y,h\}=-hy$, we get
$$\psi(y)=c(h)y$$
for some $0\neq c(h)\in\Bbb C[h^{\pm1}]$.

 Applying $\psi$ to the last equation of (\ref{RELBB}), we get $b(h)c(h)a(h)=a(\gamma h)$.
Comparing the maximal and the  minimal degrees for $h$ on both sides and then coefficients, we get
$$\psi(x)=bh^nx,\ \ \ \psi(y)=\gamma^db^{-1}h^{-n}y,$$
where $\gamma$ is a $k$-th root of unity, $b\in\Bbb C^*$ and $n\in\Bbb Z$.

Conversely, let $\psi$ be a map satisfying (\ref{PENDO}). Then $\psi$ determines a unique algebra endomorphism since $B_1$ is the commutative polynomial ring $\Bbb C[h^{\pm1},x,y]$ and  it preserves the last equation of (\ref{RELBB}).
 It is checked routinely that $\psi$ preserves the other equations of (\ref{RELBB}). Thus $\psi$ is a Poisson endomorphism. Such $\psi$ is a Poisson automorphism since there exists $\psi^{-1}$ defined by
$$\psi^{-1}(h)=\gamma^{-1}h,\ \ \psi^{-1}(x)=\gamma^{n}b^{-1}h^{-n}x,\ \
\psi^{-1}(y)=\gamma^{-n-d}bh^{n}y.
$$

(2) Let $\psi$ be a Poisson endomorphism such that $\psi(h)=\gamma$, where $\gamma\in\Bbb C^*$. Then $\psi(x)=\psi(y)=0$ by the first and the second equations of
(\ref{RELBB}). Applying $\psi$ to the third and the last equations of (\ref{RELBB}), we get that $\gamma$ is a common root of $a(h)$ and $a'(h)$. Thus $\gamma$ is a root of $a(h)$ with multiplicity $>1$.

Conversely, let $\psi$ be a map satisfying (\ref{ZENDO}). Then $\psi$ determines a unique algebra endomorphism since $\psi$ preserves the last
equation of (\ref{RELBB}). Moreover $\psi$ preserves the other equations of (\ref{RELBB}) and thus $\psi$ is a Poisson endomorphism. Now the other statements are trivial by Theorem~\ref{SIM}.

(3) Let $\psi$ be a Poisson endomorphism such that $\psi(h)=\gamma h^j$ for $\gamma\in\Bbb C^*$ and $j<0$. Since $\psi^2$ is of positive type, $\psi(x)\neq0$ and $\psi(y)\neq0$ by (1).
 We can set $\psi(x)=\sum_{k\in\Bbb Z} w_k$ by (\ref{GRA}), where $w_k\in W_k$ for each $k$. Applying $\psi$ to $\{x,h\}=hx$, we have
$$\{\psi(x), \gamma h^j\}=\gamma h^j\psi(x).$$
The left hand side of the above equation is $\sum_k jk\gamma h^jw_k$ and thus $jk=1$ for all $k$ such that $w_k\neq0$. Thus we have that
$$\psi(h)=\gamma h^{-1},\ \ \ \psi(x)=c(h)y$$
for some $0\neq c(h)\in\Bbb C[h^{\pm1}]$. Repeating this argument on $\{y,h\}=-hy$, we get
$$\psi(y)=b(h)x$$
for some $0\neq b(h)\in\Bbb C[h^{\pm1}]$. Moreover we have that $b(h)= bh^u, c(h)=ch^v$ for some $b,c\in\Bbb C^*$ and $u,v\in\Bbb Z$ by (1) since $\psi^2$ is of positive type.
 Applying $\psi$ to the last equation of (\ref{RELBB}), we get the relation $bch^{u+v}a(h)=a(\gamma h^{-1})$.

Conversely, let $\psi$ be a map satisfying (\ref{NENDO}). Then $\psi$ preserves the last equation of (\ref{RELBB}) and thus $\psi$ determines a unique algebra endomorphism. It is easy to check that $\psi$ preserves the first and the second equations of (\ref{RELBB}).
Moreover it is shown that $\psi$ preserves the third equation of (\ref{RELBB}) by differentiating the equation (\ref{AENDO}) by $h$.
Hence  $\psi$  satisfying (\ref{NENDO}) determines a unique Poisson endomorphism. Since there exists a map $\psi^{-1}$ of the negative type, $\psi$ determines a unique Poisson automorphism.
\end{proof}

\noindent
{\bf Acknowledgments}  The authors would like to appreciate S. Launois for informing a reference \cite[Proposition 4.1]{LaLe} about semiclassical limits of Ore extensions, that is used for constructing a Poisson generalized Weyl algebra, after the first version of the paper is posted.

 The second author is supported by Chungnam Nationality University Grant and thanks the Korea Institute for Advanced Study for the warm hospitality during a part of the preparation of this paper.


\bibliographystyle{amsplain}


\providecommand{\bysame}{\leavevmode\hbox to3em{\hrulefill}\thinspace}
\providecommand{\MR}{\relax\ifhmode\unskip\space\fi MR }
\providecommand{\MRhref}[2]{%
  \href{http://www.ams.org/mathscinet-getitem?mr=#1}{#2}
}
\providecommand{\href}[2]{#2}

\end{document}